\newtheorem{theorem}{Theorem}
\newtheorem{lemma}{Lemma}
\newtheorem{cor}{Corollary}
\newtheorem{problem}{Problem}
\newcommand{\deq}{\mathrel{\mathop:}=}
\newcommand{\alg}[1]{\mathbf{#1}}
\newcommand{\var}[1]{\mathsf{#1}}
\newcommand{\sipalg}[1]{\overline{\mathbf{#1}}}
\newcommand{\Bnalg}[1]{\overline{\mathbf{B}}_{#1}}
\newcommand{\dw}[1]{\mathord{\downarrow}{#1}}
\newcommand{\up}[1]{\mathord{\uparrow}{#1}}
\tikzstyle{every label}=[label distance=0pt]
\tikzstyle{bdot}[1.5]=[circle,fill,draw,thick,minimum size=#1mm,inner sep=0pt]
\tikzstyle{dot}[1.5]=[circle,draw,thick,minimum size=#1mm,inner sep=0pt]
\tikzstyle{every edge}=[draw=black,thick]
\title{Decidable quasivarieties of p-algebras} 
\author[Kowalski]{Tomasz Kowalski$^{1,2,3}$}
\author[Słomczyńska]{Katarzyna Słomczyńska$^{4}$}
\address{$^{1}$ Department of Logic, Jagiellonian University}
\email{tomasz.s.kowalski@uj.edu.pl}
\address{$^{2}$ Department of Physical and Mathematical Sciences, La Trobe
  University} 
\email{t.kowalski@latrobe.edu.au}
\address{$^{3}$ School of Historical and Philosophical Inquiry, The University of Queensland} 
\email{t.kowalski@uq.edu.au}
\address{$^{4}$ Department of Mathematics, Pedagogical University, Kraków}
\email{irena.korwin-slomczynska@up.edu.pl}
\begin{document}

\maketitle

\begin{abstract}
We show that for quasivarieties of p-algebras the properties of (i) having decidable
first-order theory and (ii) having decidable first-order theory of the finite
members, coincide. The only two quasivarieties with these properties are
the trivial variety and the variety of Boolean algebras. This contrasts sharply,
even for varieties, with the situation in Heyting algebras where decidable varieties
do not coincide with finitely decidable ones. 
\end{abstract}

\section{Introduction} 

As usual, we write
$\mathrm{Th}(\mathcal{K})$ for the
first-order theory of a class $\mathcal{K}$, and $\mathcal{K}_\mathrm{fin}$
for the set of finite objects from $\mathcal{K}$. For 
varieties of Heyting algebras a complete classification of both properties
has been known for more than three decades.
Burris~\cite{Bur82} showed that for a variety $\mathcal{V}$
of Heyting algebras $\mathrm{Th}(\mathcal{V})$ is decidable if and only if
$\mathcal{V}$ is contained in the variety $\var{BA}$ of Boolean algebras. 
Idziak and Idziak~\cite{II88} completed the picture by showing that
$\mathrm{Th}(\mathcal{V}_\mathrm{fin})$ is decidable if and only if
$\mathcal{V}$ is contained in the variety $\var{LC}$ of \emph{linear}
Heyting algebras. Idziak~\cite[Thm.~3.5]{Idz89a}
and~\cite[Thm.~4]{Idz89b}
give very general sufficient conditions for finite undecidability in
congruence distributive \emph{varieties}, namely congruence non-linearity and
congruence non-permutability. The proofs of these 
results heavily involve congruences, so they are
not applicable to quasivarieties. Thus, the corresponding (un)decidability
questions for quasivarieties are in general open. 

A close relative of Heyting algebras is the variety $\var{Pa}$ of
\emph{p-algebras} (also known in the literature as
distributive p-algebras, or pseudo-complemented distributive lattices).
P-algebras are the class of 
bounded distributive lattices with a unary operation ${}^*$ satisfying the
condition
$$
x\wedge y = 0 \iff y\leq x^*
$$
which is in fact equivalent to a finite set of equations, so $\var{Pa}$ is 
indeed a variety. An equational base is given for example in
Bergman~\cite[Ch.~3.4]{Ber11}, but it was certainly known to
Lee~\cite{Lee70}, who completely described varieties of p-algebras.  
They form a chain
$\var{Pa}_{-1}\subset \var{Pa}_0\subset \dots\subset\var{Pa}_i\subset\dots
\subset\var{Pa}$ of type $\omega+1$, where $\var{Pa}_{-1}$ is the trivial variety,
$\var{Pa}_{i}$, for $i\in \omega$, is the variety generated by
the algebra $\Bnalg{i}$, and $\var{Pa}$ is the variety
of all p-algebras. The algebra $\Bnalg{i}$, as a poset, is 
the finite Boolean algebra $\alg{B}_i$ on $i$ atoms, with a new top element
added. More precisely, given any Boolean algebra $\alg{B} = (B;\wedge,\vee,\neg,0,e)$,
with the top element $e$, we put $\overline{B} = B\uplus\{1\}$, extend the    
the order on $B$ by requiring $1>x$ for all $x\in B$, extend the lattice
operations accordingly, and define pseudo-complementation by
$$
x^*\deq\begin{cases}
         \neg x & \text{ if } x\in B\setminus\{0\}\\
         1 & \text{ if } x = 0\\
         0 & \text{ if } x = 1
       \end{cases}     
$$
to get the p-algebra $\sipalg{B} = (B;\wedge,\vee,{}^*,0,1)$.
A p-algebra is subdirectly irreducible if and only if it is of the form
$\sipalg{B}$ for some Boolean algebra $\alg{B}$.

The lattice of
subquasivarieties of $\var{Pa}$ is much more complex, in fact
Wroński~\cite{Wro76} showed that it is uncountable, and
Gr\"atzer~\emph{et al.}~\cite{GLQ80} improved it by showing the same for
$\var{Pa}_3$.
From the point of view of logic, p-algebras are the algebraic semantics,
in the sense of Rebagliato and Verd\'u~\cite{RV93}, of
the implication-free fragment of the intuitionistic propositional calculus
with negation. For a quick 
introduction-cum-survey of p-algebras we refer the reader to
Bergman~\cite[Ch.~3.4]{Ber11}. Our notation mostly comes from there.

We will characterise the quasivarieties of
p-algebras with decidable and with finitely decidable first-order theories.
It turns out that, unlike in varieties of Heyting algebras, these classes coincide. 
An interesting technicality we encounter on the way is an observation
concerning the algebras whose underlying lattice is a three-element chain.   
This algebra admits a unique Heyting implication and the resulting Heyting
algebra, often denoted by $\alg{H}_3$, is the unique three-element Heyting
algebra. Its p-algebra reduct is isomorphic to $\Bnalg{1}$, and it is the unique 
three-element p-algebra. Now, 
although the variety $V(\alg{H}_3)$ of Heyting algebras
has finitely decidable first-order theory, the quasivariety
$Q(\Bnalg{1})$ has finitely undecidable theory.
Since $Q(\Bnalg{1})$ is a variety, namely,
the variety of \emph{Stone algebras}, we have a curious contrast between
finite decidability of the variety generated by $\alg{H}_3$ and
finite undecidability of the variety generated by a reduct of the same algebra.

We assume familiarity with the method of
\emph{semantic embeddings}; we refer the reader to
Burris and Sankappanavar~\cite{BS81} Ch.~V, {\S}5 for details. Our proofs
proceed by modifications of proofs from Burris~\cite{Bur82} and
Idziak and Idziak~\cite{II88}, so the
reader should keep these articles handy to refer to them when needed.

Priestley~\cite{Pri75} gives a topological duality for p-algebras, of which we
will only need the topology-free part for finite algebras. 
Dual objects are finite posets. Morphisms (henceforth \emph{pp-morphisms}, to
distinguish them from p-morphisms, familiar from modal and intuitionistic logic)
are order preserving maps
$f\colon P\to Q$ satisfying $f(\max\up{p}) = \max\up{f(p)}$ for every $p\in P$,
where $\up{p}$ is the upward closure of $\{p\}$.
For a finite poset $P$, let $\mathrm{Up}(P)$ be the set of upsets of $P$,
ordered by inclusion. Then $\mathrm{Up}(P)$ is a lattice under the natural set
operations, and a p-algebra under the operation $X^* = P\setminus \dw{X}$  
where $\dw{X}$ is the downward closure of $X$. 
Every finite p-algebra $\alg{A}$ is isomorphic to the p-algebra
$\mathrm{Up}(\mathcal{J}(\alg{A}))$ where $\mathcal{J}(\alg{A})$
is the poset of join-irreducible elements
of $\alg{A}$ with the dual order inherited
from $\alg{A}$ (recall that in the finite case
$\mathcal{J}(\alg{A})$ is order-isomorphic to the poset of prime filters of $\alg{A}$
ordered by inclusion). The usual duality between surjective/injective pp-morphisms
of posets and injective/surjective homomorphisms of p-algebras holds, as well as
the duality between finite disjoint unions and finite direct products.

\section{Decidable quasivarieties}

Burris~\cite{Bur82} proved that the variety of Heyting algebras generated
by the three element algebra $\alg{H}_3$ has undecidable theory. The proof
proceeds via semantically embedding Boolean Pairs (henceforth
$\mathsf{BP}$) into powers of $\mathbf{H}_3$, so in fact it also shows that
$Q(\alg{H}_3)$ has undecidable theory.
The p-algebra reduct of
$\mathbf{H}_3$ is $\Bnalg{1}$, but the proof from~\cite{Bur82} does not apply directly,
since it relies on the presence of Heyting implication. With a tiny modification
below, it works. Recall that a Boolean Pair is a structure
$(B,B_0)$, where  $B$ is a Boolean algebra and
$B_0$ is a Boolean subalgebra of $B$. Formally,
$\mathsf{BP}$ is the class of Boolean algebras expanded by a unary
predicate $B_0$ whose interpretation is a subalgebra of $B$.
Rubin~\cite{Rub76} shows that the theory of $\mathsf{BP}$ is undecidable.

\begin{lemma}\label{lem:hered-undec}
There is a semantic embedding of $\mathsf{BP}$\/ into   
$Q(\Bnalg{1})$. Hence,
$\mathrm{Th}(Q(\Bnalg{1}))$ is undecidable.
\end{lemma}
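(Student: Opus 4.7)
The plan is to adapt Burris's~\cite{Bur82} semantic embedding of $\mathsf{BP}$ into $Q(\alg{H}_3)$, removing any reliance on Heyting implication so that it survives in the p-algebra signature. Since $\Bnalg{1}$ is the implication-free reduct of $\alg{H}_3$, any embedding using only the operations $\wedge,\vee,{}^*,0,1$ automatically lands inside $Q(\Bnalg{1})$.

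First I would assemble the p-algebraic ingredients available in every Stone algebra $\alg{A}\in Q(\Bnalg{1})$. The skeleton $B(\alg{A})=\{a\in A:a=a^{**}\}$ and the dense filter $D(\alg{A})=\{a\in A:a^*=0\}$ are each defined by a single atomic formula; the skeleton is a Boolean subalgebra under $\wedge,\vee,{}^*,0,1$; and every element factors as $a=a^{**}\wedge(a\vee a^*)$ with $a\vee a^*$ dense. In the concrete algebra $\Bnalg{1}^X$ these specialise to $\{0,1\}^X$ and $\{e,1\}^X$ respectively, providing the raw material for coding.

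Next I would follow Burris's blueprint for encoding a Boolean pair $(\alg{B},\alg{B}_0)$ inside a suitable algebra built from powers of $\alg{H}_3$: the Boolean sort of the pair is interpreted by the skeleton, and the distinguished subalgebra $\alg{B}_0$ is picked out by a first-order formula with dense parameters that witness the embedding $\alg{B}_0\hookrightarrow\alg{B}$ under Priestley duality. The required modification is syntactic: I would scan Burris's defining formulas for each occurrence of $a\to b$ and replace it with a p-algebra term. Whenever the implication has a regular or dense operand---which one expects is the case, since Burris works with skeleton and dense elements throughout---$a\to b$ collapses to expressions such as $a^*\vee b$ or $(a\wedge b^*)^*$, both already in the p-signature. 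After this rewriting, the coded algebra should be literally the same as in~\cite{Bur82}.

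Finally I would check three routine items---that every Boolean pair arises from some $\alg{A}\in Q(\Bnalg{1})$ with suitable parameters, that conversely any parameterised algebra of this form decodes to a Boolean pair, and that the translation of $\mathsf{BP}$-sentences into $Q(\Bnalg{1})$-sentences is effective---and combine these with Rubin's~\cite{Rub76} undecidability of $\mathrm{Th}(\mathsf{BP})$ to conclude undecidability of $\mathrm{Th}(Q(\Bnalg{1}))$. The main obstacle is the rewriting step in the previous paragraph: confirming that every use of Heyting implication in Burris's interpretation really is between operands whose implication collapses to a p-term. The authors' phrase ``tiny modification'' suggests only cosmetic substitution is required, but the verification must ensure that the translated formulas still define a Boolean subalgebra predicate and that no spurious Heyting content survives.
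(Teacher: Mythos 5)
Your overall strategy is the right one and matches the paper's in spirit: adapt Burris's coding of Boolean pairs into powers of the three-element algebra, exploiting the skeleton and the dense elements, and eliminate Heyting implication. But as written the proposal is a plan rather than a proof: the step you yourself identify as ``the main obstacle''---verifying that every occurrence of $a\to b$ in Burris's interpretation can be rewritten as a p-algebra term---is exactly the content of the lemma, and you leave it unverified (``which one expects is the case''). Nothing in the proposal actually exhibits the algebra that codes a given pair $(B,B_0)$, nor the defining formulas, so the semantic embedding is never constructed.

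The paper's proof does not proceed by syntactic surgery on Burris's formulas; it gives a short self-contained coding in which the issue you flag never arises. Concretely, presenting $B_0\leq B\leq\mathcal{P}(I)$, it takes the subalgebra $P(B,B_0)=\{f\in\{0,e,1\}^I: f^{-1}(0)\in B_0,\ f^{-1}(1)\in B\}$ of $\Bnalg{1}^{I}$, expands the language by a single constant $\overline{e}$ interpreted as the constant function $e$, and recovers the pair as $C=\{f\vee\overline{e}: f\in P(B,B_0)\}$ and $C_0=\{f^{**}\vee\overline{e}: f\in P(B,B_0)\}$, with the Boolean operations being the relativised $\wedge$, $\vee$, ${}^*$. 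The adjoined parameter $\overline{e}$ is the one device your outline does not supply a substitute for: without some way to pin down the middle level of $\Bnalg{1}^I$ in the implication-free signature, the sets $C$ and $C_0$ are not definable, and your appeal to ``dense parameters'' is too vague to guarantee this. To turn your proposal into a proof you would need either to carry out the rewriting of Burris's formulas in full (checking each implication individually) or, more economically, to produce an explicit coding such as the one above.
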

  
\begin{proof}
An arbitrary Boolean Pair $(B, B_0)$ can be presented as
a Boolean algebra  $B$ of subsets of some $I$, and
a subalgebra $B_0$ is of $B$, so that $B_0\leq B\leq \mathcal{P}(I)$.
Let the universe of $\Bnalg{1}$ be $\{0,e,1\}$, and define
$$
P(B, B_0) = \{f\in \{0,e,1\}^I: f^{-1}(0)\in B_0, f^{-1}(1)\in B\}.
$$ 
Then $P(B, B_0)$ is a subuniverse of $\Bnalg{1}^I$. Take a
characteristic function $\chi_X$ of subsets of $I$ given by
$$
\chi_X(i) = \begin{cases}
              1 & \text{ if } i\in X\\
              e & \text{ if } i\notin X
           \end{cases}       
$$
and put
$C = \{\chi_X: X\in B\}$ and $C_0 = \{\chi_X: X\in B_0\}$.  
Clearly $(C,C_0)$ with $C_0\leq C\leq \{1,e\}^I$ is a Boolean Pair isomorphic to
$(B,B_0)$. It remains to show that $(C,C_0)$ can be defined in $P(B, B_0)$
using the language of p-algebras expanded by a single constant $\overline{e}$,
interpreted as identically equal to $e$. To this end, it suffices to show that
\begin{align*}
C &= \{f\vee\overline{e}: f\in P(B, B_0)\},\\
C_0 &= \{f^{**}\vee \overline{e}: f\in P(B, B_0)\} 
\end{align*}
since the operations in $C$ and $C_0$ are relativisations
of meet, join, and pseudo-complement to $\{1,e\}^I$. 
The inclusions from left to right are straightforward.
The converses are not difficult either, so let us just show the second one.
By closure under complementation, $X\in B_0$ iff $I\setminus X\in B_0$,
so consider $X = I\setminus f^{-1}(0)$. 
If $i\in X$, then $f(i) \neq 0$ and therefore $f(i)^{**} = 1$, so 
$f(i)^{**}\vee e = 1 = \chi_X(i)$. If $i\notin X$, then $f(i) = 0$
and therefore $f(i)^{**} = 0$, so $f(i)^{**}\vee e = e = \chi_X(i)$.

It follows that $\var{BP}$ semantically embeds into  $Q(\Bnalg{1})$. 
\end{proof}  

Since every quasivariety $\mathcal{Q}$ of p-algebras has
$\mathcal{Q}\subseteq\var{BA}$ if and only if $\Bnalg{1}\notin\mathcal{Q}$
we immediately obtain the next result.

\begin{cor}\label{cor:dec-qvar}
A quasivariety $\mathcal{Q}$ of p-algebras has decidable first-order theory
if and only if $\mathcal{Q}\subseteq\var{BA}$.
\end{cor}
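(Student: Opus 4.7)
The plan is to reduce the corollary to the equivalence $\mathcal{Q}\subseteq\var{BA}$ iff $\Bnalg{1}\notin\mathcal{Q}$, which the paper flags as the key observation. Once this is in place, the corollary splits cleanly. If $\mathcal{Q}\not\subseteq\var{BA}$, then $\Bnalg{1}\in\mathcal{Q}$, hence $Q(\Bnalg{1})\subseteq\mathcal{Q}$, and the semantic embedding of $\mathsf{BP}$ from Lemma~\ref{lem:hered-undec} lives inside $\mathcal{Q}$, making $\mathrm{Th}(\mathcal{Q})$ undecidable. Conversely, any quasivariety $\mathcal{Q}\subseteq\var{BA}$ is either the trivial variety or all of $\var{BA}$, because a nontrivial Boolean algebra contains the two-element Boolean algebra $\alg{B}_1$ as a subalgebra and $Q(\alg{B}_1)=\var{BA}$ by Stone representation. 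Both have decidable first-order theory (the latter by Tarski), so $\mathrm{Th}(\mathcal{Q})$ is decidable.

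The remaining task is the equivalence itself. The direction $\mathcal{Q}\subseteq\var{BA}\Rightarrow\Bnalg{1}\notin\mathcal{Q}$ is immediate: $\Bnalg{1}$ violates $x^{**}=x$ at the middle element $e$. For the contrapositive of the converse, I will start with any $\alg{A}\in\mathcal{Q}\setminus\var{BA}$ and produce $\Bnalg{1}$ as a subalgebra of $\alg{A}$. My recipe is to locate a \emph{dense} element $c\in\alg{A}$, meaning $c^{*}=0$, with $0<c<1$. Then $\{0,c,1\}$ is visibly closed under $\wedge$, $\vee$, ${}^*$ and is isomorphic to $\Bnalg{1}$ via $c\mapsto e$, whence $\Bnalg{1}\in IS(\alg{A})\subseteq\mathcal{Q}$.

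To find such a $c$, I will use that a p-algebra is Boolean iff $x\vee x^{*}=1$ for all $x$, since together with the always-valid $x\wedge x^{*}=0$ this upgrades $x^{*}$ to a Boolean complement of $x$. A non-Boolean $\alg{A}$ therefore admits $a$ with $a\vee a^{*}\neq 1$; setting $c=a\vee a^{*}$ one computes $c^{*}=a^{*}\wedge a^{**}=0$, so $c$ is dense, while $c\neq 1$ by choice and $c\neq 0$ because $c=0$ would force $a=a^{*}=0$, contradicting $0^{*}=1$. The only point where care is needed is the transfer of undecidability along semantic embeddings to supersets; but this is built into the construction in Lemma~\ref{lem:hered-undec}, where the embedding is produced inside a single algebra $P(B,B_0)\in Q(\Bnalg{1})$ and hence remains an embedding into any $\mathcal{Q}\supseteq Q(\Bnalg{1})$.
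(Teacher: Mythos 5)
Your proposal is correct and takes essentially the same route as the paper, which obtains the corollary immediately from Lemma~\ref{lem:hered-undec} together with the (stated as well known) equivalence $\mathcal{Q}\subseteq\var{BA}$ iff $\Bnalg{1}\notin\mathcal{Q}$ and the decidability of $\mathrm{Th}(\var{BA})$. The only difference is that you additionally supply a proof of that equivalence via the dense element $c=a\vee a^{*}$; this argument is correct, and the paper simply omits it.
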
  

\section{Finitely decidable quasivarieties}

Now we turn to theories of finite algebras. As we mentioned already,
Theorem~4 of Idziak~\cite{Idz89b} applies
to all varieties of p-algebras, but since its proof makes an
essential use of congruences, it is not readily applicable to quasivarieties.
Fortunately, some modifications of ideas from Idziak and Idziak~\cite{II88}
apply. Let $\alg{N}$ be the algebra whose underlying lattice is depicted
in~Figure~\ref{fig:N}.
\begin{figure}
\begin{center}
\begin{tikzpicture}
\node[dot, label=below:$0$] (v0) at (0,0) {};
\node[dot, label=right:$a$] (v1) at (0,0.5) {};
\node[dot, label=left:$b$] (v2) at (-0.5,1) {};
\node[dot, label=right:$c$] (v3) at (0.5,1) {};
\node[dot, label=right:$e$] (v4) at (0,1.5) {};
\node[dot, label=above:$1$] (v5) at (0,2) {};
\path[draw,thick] (v0)--(v1)--(v2)--(v4)--(v5);
\path[draw,thick] (v1)--(v3)--(v4);
\end{tikzpicture}
\end{center}\label{fig:N}\caption{The algebra $\mathbf{N}$.}
\end{figure}
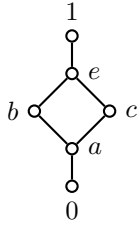
Clearly $\alg{N}$ can be viewed as a Heyting algebra, and hence also as a
p-algebra. Note that $\alg{N}$ as a p-algebra is not subdirectly
irreducible, indeed it has three minimal meet-irreducible congruences, namely,
$\theta(b,1)$, $\theta(c,1)$, and $\theta(a,e)$ the last of which is not
a Heyting algebra congruence. The lemma below is not strictly necessary
for the proof of our finite undecidability result below, but it clarifies both the
connection to and the discrepancy with the finite decidability result
of Idziak and Idziak~\cite{II88}

\begin{lemma}\label{lem:char-stone}
Let $\mathcal{Q}$ be a quasivariety of p-algebras. The following are equivalent.
\begin{enumerate}
\item $\mathcal{Q}\subseteq \var{BA}$.
\item $\Bnalg{1}\notin\mathcal{Q}$.
\item $\alg{N}\notin\mathcal{Q}$.
\end{enumerate}  
\end{lemma}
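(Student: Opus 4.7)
The three conditions naturally split into $(1)\Leftrightarrow(2)$ and $(2)\Leftrightarrow(3)$. The first equivalence has been noted in the text immediately preceding the lemma and I will invoke it as given; this reduces the task to showing $(2)\Leftrightarrow(3)$, which I will establish in the contrapositive form $\Bnalg{1}\in\mathcal{Q}\iff\alg{N}\in\mathcal{Q}$.

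The direction $\alg{N}\in\mathcal{Q}\Rightarrow\Bnalg{1}\in\mathcal{Q}$ should follow by exhibiting $\Bnalg{1}$ as a subalgebra of $\alg{N}$. A short computation of pseudo-complements in $\alg{N}$ gives $0^*=1$, $e^*=0$, $1^*=0$, so the set $\{0,e,1\}\subseteq N$ is closed under $\wedge,\vee,{}^*$; the restricted operations then match those of $\Bnalg{1}$ by definition, giving the desired embedding.

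For $\Bnalg{1}\in\mathcal{Q}\Rightarrow\alg{N}\in\mathcal{Q}$ I plan to build an embedding $\alg{N}\hookrightarrow\Bnalg{1}^{3}$ by working on the dual side via the Priestley duality recalled in the introduction. The dual of $\alg{N}$ is the four-element poset $\mathcal{J}(\alg{N})=\{1,b,c,a\}$ with $1$ at the bottom, $b,c$ incomparable in the middle, and $a$ on top; the dual of $\Bnalg{1}$ is a two-element chain. I will define an order-preserving map $f$ from the disjoint union $P_1\sqcup P_2\sqcup P_3$ of three copies of the two-chain, with tops $e_i$ and bottoms $1_i$, onto $\mathcal{J}(\alg{N})$ by $f(e_i)=a$ for $i=1,2,3$ and $f(1_1)=1$, $f(1_2)=b$, $f(1_3)=c$. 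Because $a$ is the unique maximum of $\up q$ for every $q\in\mathcal{J}(\alg{N})$, and $f(e_i)=a$ is already the unique maximum of $\up{e_i}$ in each $P_i$, the pp-morphism condition $f(\max\up p)=\max\up{f(p)}$ holds automatically at every $p$. Dualising the resulting surjective pp-morphism yields the required embedding $\alg{N}\hookrightarrow\Bnalg{1}^{3}$, so $\alg{N}\in SP(\Bnalg{1})\subseteq\mathcal{Q}$.

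The only nontrivial step is the last construction; the rest reduces to routine checks of pseudo-complements and of the pp-morphism condition, both made transparent by the duality. Alternatively one could bypass duality by writing down the six triples in $\{0,e,1\}^3$ directly and verifying closure under the operations, but this is strictly more tedious than the dual argument above.
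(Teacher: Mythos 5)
Your proof is correct, and it follows the paper's decomposition exactly: $(1)\Leftrightarrow(2)$ is invoked as known, $\Bnalg{1}$ is exhibited as the subalgebra of $\alg{N}$ on $\{0,e,1\}$ (your pseudo-complement computations $0^*=1$, $e^*=0$, $1^*=0$ are right), and the remaining direction is settled by embedding $\alg{N}$ into $\Bnalg{1}^{3}$. Where you genuinely differ is in how that embedding is produced. The paper writes down the subuniverse $\{(0,0,0),(e,e,e),(1,e,e),(e,e,1),(1,e,1),(1,1,1)\}$ of $\Bnalg{1}^{3}$ and verifies directly that it is isomorphic to $\alg{N}$ --- precisely the ``more tedious'' alternative you mention at the end. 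You instead work dually: your map from three disjoint two-element chains onto the diamond poset $\mathcal{J}(\alg{N})$ is surjective, order-preserving, and satisfies $f(\max\up{p})=\max\up{f(p)}$ for exactly the reason you state (every $\max\up{q}$ on both sides is the singleton $\{a\}$), so duality gives the embedding. Your route is the same device the paper deploys later in Lemma~\ref{lem:fingraph-int} (the pp-morphism $\mathbf{2}\times I\to P_G$), so it is more uniform with the rest of the paper and makes it transparent why exponent $3$ suffices: one two-chain per non-maximal element of $\mathcal{J}(\alg{N})$. What the paper's version buys is self-containedness at this point in the text, where the duality machinery has been stated but not yet exercised; the two arguments produce the same embedding up to a permutation of coordinates.
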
  

\begin{proof}
The equivalence of (1) and (2) is well known. To prove the equivalence of (2) and (3)
we will show that $Q(\alg{N}) = Q(\Bnalg{1})$. It is easy to see that
$\Bnalg{1}$ is a subalgebra of $\alg{N}$ with the universe $\{0,e,1\}$,
so $Q(\alg{N})\supseteq Q(\Bnalg{1})$.
For converse, consider $\Bnalg{1}^3$ and the set
$N = \{(0,0,0), (e,e,e), (1,e,e), (e,e,1), (1,e,1), (1,1,1)\}$.  Simple
calculations show that $N$ is a subuniverse of $\Bnalg{1}^3$ and
the subalgebra on this universe is isomorphic to $\alg{N}$. Hence
$\alg{N}\in Q(\Bnalg{1})$ proving that $Q(\alg{N})\subseteq Q(\Bnalg{1})$.
\end{proof}  

Idziak and Idziak~\cite{II88} proved that finite graphs can be semantically embedded
into the variety of Heyting algebras generated by $\alg{N}$, or indeed, in any
class of Heyting algebras containing all finite powers of $\alg{N}$.
But $\alg{N}$ viewed as a Heyting algebra is subdirectly irreducible, and
does not belong to $V(\alg{H}_3)$, in stark contrast to
Lemma~\ref{lem:char-stone}. This is the root of the discrepancy between the
results in~\cite{II88} and ours. Inspection
of the proof from~\cite{II88} shows that it only relies on lattice properties of
$\alg{N}$, so in principle it can be applied to our case as well.
However, we give a simpler and more direct proof. 

\begin{lemma}\label{lem:fingraph-int}
The class of all finite graphs can be interpreted in
$Q_{\mathrm{fin}}(\Bnalg{1})$. 
\end{lemma}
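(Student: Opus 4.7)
By Lemma~\ref{lem:char-stone}, the algebra $\alg{N}$ lies in $Q(\Bnalg{1})$, so all finite powers of $\alg{N}$ and their subalgebras belong to $Q_{\mathrm{fin}}(\Bnalg{1})$. The plan is to construct, uniformly for each finite graph $G = (V, E)$, a finite algebra $\alg{A}_G \leq \alg{N}^V$ together with formulas $\phi_V(x)$ and $\phi_E(x,y)$ in the first-order language of p-algebras (possibly with finitely many parameters) that carve out, inside $\alg{A}_G$, a copy of $(V,E)$. Since the first-order theory of finite graphs is undecidable, this will yield the required interpretation.

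For the encoding, to each vertex $v \in V$ I associate the element $\alpha_v \in \alg{N}^V$ given by $\alpha_v(v) = e$ and $\alpha_v(w) = 0$ for $w \neq v$; to each edge $\epsilon = \{u, v\} \in E$ I fix an arbitrary ordering of its endpoints and associate $\beta_\epsilon$ with $\beta_\epsilon(u) = b$, $\beta_\epsilon(v) = c$, and $\beta_\epsilon(w) = 0$ otherwise. Since $b$ and $c$ play interchangeable roles under $\wedge, \vee, {}^*$ in $\alg{N}$, the choice of orientation is invisible to first-order formulas. I let $\alg{A}_G$ be the subalgebra of $\alg{N}^V$ generated by the $\alpha_v$'s and $\beta_\epsilon$'s.

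To read $(V,E)$ off $\alg{A}_G$, I exploit that $x \mapsto x^{**}$ projects $\alg{A}_G$ onto its Boolean skeleton, whose atoms are precisely the elements $\alpha_v^{**}$. A vertex element can be singled out as an $x$ such that $x^{**}$ is an atom of the skeleton (first-order expressible via minimality under $y^{**} \leq x^{**}$) and $x \vee x^* \neq 1$; this keeps $\alpha_v$, for which $\alpha_v \vee \alpha_v^*$ equals $e$ at $v$ and $1$ elsewhere, but excludes the skeleton atom $\alpha_v^{**}$ itself. An edge between vertex candidates $\alpha_u$ and $\alpha_v$ is then witnessed by an element $\beta$ satisfying $\beta^{**} = \alpha_u^{**} \vee \alpha_v^{**}$, $0 < \beta < \alpha_u \vee \alpha_v$, and $\beta \wedge \alpha_u \neq 0 \neq \beta \wedge \alpha_v$: these constraints force $\beta$ to take incomparable non-extremal values at $u$ and $v$, hence to be an edge encoding (up to the $b/c$ swap).

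The main obstacle is to write $\phi_V$ and $\phi_E$ so that their interpretation in \emph{any} algebra from $Q_{\mathrm{fin}}(\Bnalg{1})$ defines an honest finite graph, ruling out spurious elements that accidentally satisfy the syntactic conditions but are not canonical encodings. The standard remedy, following the semantic-embeddings setup of Burris and Sankappanavar~\cite{BS81}, is to include a small finite tuple of parameters that rigidify the Boolean skeleton and restrict the interpretation to a canonical, uniformly definable sort; one must then check that, in this restricted sort, the formulas define exactly the original $G$. Once that rigidification is in place, the computable assignment $G \mapsto \alg{A}_G$ together with the pair $(\phi_V, \phi_E)$ provides the desired interpretation.
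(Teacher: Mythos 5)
There is a genuine gap, and you in effect concede it yourself: your final paragraph identifies ``the main obstacle'' --- producing formulas $\phi_V,\phi_E$ that provably recover exactly $G$ and nothing else --- and then replaces its resolution with an appeal to an unexhibited ``standard remedy'' of rigidifying parameters that ``one must then check''. In an interpretation argument that verification \emph{is} the proof. Worse, the formulas you do sketch fail concretely. For edges: take non-adjacent vertices $u,v$ with $u$ incident to some edge $\epsilon$. The element $(\beta_\epsilon\wedge\alpha_u)\vee\alpha_v$ lies in $\alg{A}_G$, takes the value $b$ at $u$, $e$ at $v$, and $0$ elsewhere, and satisfies all three of your conditions: its double pseudocomplement equals $\alpha_u^{**}\vee\alpha_v^{**}$, it lies strictly between $0$ and $\alpha_u\vee\alpha_v$, and it has nonzero meet with both $\alpha_u$ and $\alpha_v$. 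So $\phi_E$ reports an edge that is not in $G$. The claim that these constraints ``force $\beta$ to take incomparable non-extremal values at $u$ and $v$'' is simply false: nothing in them distinguishes the value pattern $\{b,c\}$ from $\{b,e\}$ or $\{a,e\}$. Similarly, $\phi_V$ does not isolate $\alpha_u$: the element $\beta_\epsilon\wedge\alpha_u$ (value $b$ at $u$, $0$ elsewhere) has the same double pseudocomplement and also satisfies $x\vee x^*\neq 1$, so at best you define vertices up to a definable equivalence, which you would then need to introduce and check for compatibility with $\phi_E$.

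For comparison, the paper avoids coordinatewise coding in powers of $\alg{N}$ altogether and works dually: from $G$ it builds the poset $P_G$ on $\{\emptyset\}\cup\{\{v\}:v\in V\}\cup E$ under reverse inclusion, exhibits a surjective pp-morphism onto $P_G$ from $\mathbf{2}\times I$ (a disjoint union of two-element chains), and concludes by duality that $\mathrm{Up}(P_G)$ embeds into a finite power of $\Bnalg{1}$. The graph is then read off $\mathrm{Up}(P_G)$ from its join-irreducible elements --- vertices are the join-irreducible covers of the unique atom, and an edge between two of them is witnessed by a unique join-irreducible element above both --- all first-order and parameter-free. To salvage your route you would have to either prove $\alg{A}_G$ contains no spurious witnesses (the computation above shows it does) or redesign $\phi_V,\phi_E$ around structural features such as join-irreducibility that spurious elements cannot imitate; the paper's choice of target algebra and defining conditions is precisely such a design.
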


\begin{proof}
We use duality for finite p-algebras.
Take a finite graph $G=(V,E)$, with $E$ viewed as a set of 2-element subsets of
$V$. Let $P_G$ be the poset with the universe $\{\emptyset\}\cup\{\{v\}: v\in V\}
\cup E$ ordered by reverse inclusion. Put $I = E\cup V$ and
endow $I$ with the identity ordering so that $I$ becomes an antichain.
Let $\mathbf{2}$ be the two-element chain $\{0,1\}$
under the natural ordering. Let $U = \mathbf{2}\times I$, as a poset.
The map $f\colon U\to P_G$ given by
$f(\langle 1,i\rangle) = \emptyset$ and $f(\langle 0,i\rangle) = i$,
is a surjective pp-morphism.
(Note that $f$ is not a p-morphism of dual spaces of Heyting
algebras.) 
Hence, by duality, the algebra $\mathrm{Up}(P_G)$ embeds in
$\mathrm{Up}(U)$, that is $\mathrm{Up}(\mathbf{2}\times I)$, which again by
duality is isomorphic to 
$\bigl(\mathrm{Up}(\mathbf{2})\bigr)^I$. But $\mathrm{Up}(\mathbf{2})$
is isomorphic to $\Bnalg{1}$, so we obtain that
$\mathrm{Up}(P_G)$ embeds in $\Bnalg{1}^{I}$.
To recover $G$ from $\mathrm{Up}(P_G)$ it suffices to note that
(i) it has precisely one atom, (ii) the elements $\up{\{u\}}$ are join-irreducible
covers of the unique atom, so we take as vertices of $G$ precisely these
elements, (iii)   
$\{u,v\}\in E$ if and only if $\up{\{u\}}$ and $\up{\{v\}}$ have a unique
join-irreducible element above them.
Clearly, all these properties are first-order expressible in the language
of p-algebras.
\end{proof}

Our proof method is illustrated in Figure~\ref{fig:method}. The graph $G$ and
the poset $P_G$ are on the left, the algebra $\mathrm{Up}(P_G)$ is
on the right. We write $u$ instead of $\{u\}$ and
$\overline{uv}$ instead of $\{u,v\}$ for conciseness.
\begin{figure}
\begin{center}
\begin{tikzpicture}
\node[dot, label=below:$u$] (u) at (0,0) {};
\node[dot, label=below:$v$] (v) at (0.7,0) {};
\node[dot, label=below:$w$] (w) at (1.4,0) {};
\path[draw,thick] (u)--(v);
\node (l) at (0.7, -1.7) {$G$};
\end{tikzpicture}
\qquad
\begin{tikzpicture}
\node[dot, label=above:$\emptyset$] (t) at (0.7,1.4) {};  
\node[dot, label=left:$u$] (u) at (0,0.7) {};
\node[dot, label=left:$v$] (v) at (0.7,0.7) {};
\node[dot, label=right:$w$] (w) at (1.4,0.7) {};
\node[dot, label=below:$\overline{uv}$] (uv) at (0.35, 0) {};
\path[draw,thick] (u)--(t) (v)--(t) (w)--(t) (u)--(uv) (v)--(uv);
\node (l) at (0.7, -1) {$P_G$};
\end{tikzpicture}
\qquad\qquad
\begin{tikzpicture}
\node[dot, label=below:$\emptyset$] (bot) at (0,0) {};
\node[bdot, label=right:$\up{\emptyset}$] (t) at (0,0.7) {};
\node[bdot, label=left:$\up{u}$] (u) at (-0.7,1.4) {};
\node[bdot, label=left:$\up{v}$] (v) at (0,1.4) {};
\node[bdot, label=right:$\up{w}$] (w) at (0.7,1.4) {};
\node[dot] (uv) at (-0.7,2.1) {};
\node[dot] (uw) at (0,2.1) {};
\node[dot] (vw) at (0.7,2.1) {};
\node[dot] (uvw) at (0,2.8) {};
\node[bdot, label=left:$\up{\overline{uv}}$] (uvi) at (-1.4,2.8) {};
\node[dot, label=above:$P_G$] (top) at (-0.7,3.5) {};
\path[draw,thick] (bot)--(t)--(u)--(uv)--(uvi)--(top)--(uvw)--(vw)--(w)--(t);
\path[draw,thick] (t)--(v)--(uv)--(uvw)--(uw)--(w) (u)--(uw) (v)--(vw);
\node (l) at (-2, -0.3) {$\mathrm{Up}(P_G)$};
\end{tikzpicture}
\end{center}\label{fig:method}\caption{Proof method illustrated.}
\end{figure}
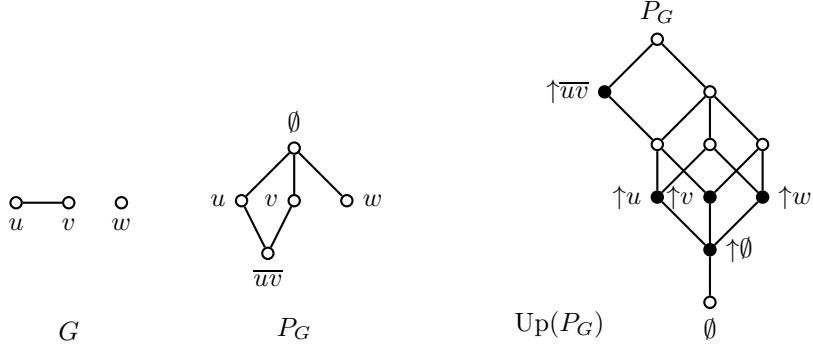
Note that $P_G$ above is a pp-morphic image of the disjoint union of four copies
of the two element chain. The black dots in the diagram of
$\mathrm{Up}(P_G)$ mark the join-irreducible elements: under the inverse of the
ordering inherited from the algebra they form a poset 
order-isomorphic to $P_G$. 

\begin{cor}\label{cor:fin-dec-qvar}
A quasivariety $\mathcal{Q}$ of p-algebras has decidable first-order theory
of finite algebras if and only if $\mathcal{Q}\subseteq\var{BA}$.
\end{cor}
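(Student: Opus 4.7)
The statement is a biconditional, so my plan is to handle the two implications separately, relying on the lemmas already in place in the preceding two sections.

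For the easy direction, assuming $\mathcal{Q}\subseteq\var{BA}$, I would observe that every finite algebra in $\mathcal{Q}$ is a finite Boolean algebra whose pseudo-complementation coincides with Boolean complementation. Then I would appeal to the classical fact that $\mathrm{Th}(\var{BA}_{\mathrm{fin}})$ is decidable: any finite Boolean algebra is determined up to isomorphism by the number of its atoms, and the truth of a given first-order sentence in $\mathbf{2}^{n}$ stabilises for $n$ above a computable bound. Hence $\mathrm{Th}(\mathcal{Q}_{\mathrm{fin}})$ is decidable in both cases ($\mathcal{Q}$ trivial or $\mathcal{Q}=\var{BA}$).

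For the nontrivial direction I would argue contrapositively. Suppose $\mathcal{Q}\not\subseteq\var{BA}$. Lemma~\ref{lem:char-stone} then gives $\Bnalg{1}\in\mathcal{Q}$, so $Q(\Bnalg{1})\subseteq\mathcal{Q}$ and therefore $Q_{\mathrm{fin}}(\Bnalg{1})\subseteq\mathcal{Q}_{\mathrm{fin}}$. Lemma~\ref{lem:fingraph-int} already provides a semantic embedding of the class of finite graphs into $Q_{\mathrm{fin}}(\Bnalg{1})$: for each finite graph $G$, it exhibits the algebra $\mathrm{Up}(P_G)\in Q_{\mathrm{fin}}(\Bnalg{1})$ together with first-order formulas in the pure p-algebra language (``there is a unique atom'', ``$x$ is a join-irreducible cover of that atom'', ``$x,y$ have a unique join-irreducible element above them'') that recover $G$. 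Because these interpreting formulas live in the p-algebra language, the very same data constitute a semantic embedding into any class of p-algebras containing $Q_{\mathrm{fin}}(\Bnalg{1})$, in particular into $\mathcal{Q}_{\mathrm{fin}}$. Trakhtenbrot's theorem on the undecidability of the first-order theory of finite graphs then yields undecidability of $\mathrm{Th}(\mathcal{Q}_{\mathrm{fin}})$.

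The substantive work is already absorbed in Lemma~\ref{lem:fingraph-int}, so no genuine new obstacle arises at the level of the corollary. The only point that needs to be noted, and the closest thing to a subtlety, is that interpretations are language-local: upgrading the target class from $Q_{\mathrm{fin}}(\Bnalg{1})$ to the possibly much larger $\mathcal{Q}_{\mathrm{fin}}$ is automatic, since the interpreting formulas and the witnessing algebras $\mathrm{Up}(P_G)$ are unchanged. On the easy side, classical decidability of finite Boolean algebras finishes things without further effort.
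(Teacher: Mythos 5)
Your proof is correct and matches the paper's intended argument exactly: the paper states this corollary without an explicit proof, as an immediate consequence of Lemma~\ref{lem:char-stone} (equivalently, the fact that $\mathcal{Q}\not\subseteq\var{BA}$ forces $\Bnalg{1}\in\mathcal{Q}$), Lemma~\ref{lem:fingraph-int}, and the classical decidability of the theory of finite Boolean algebras, which is precisely the decomposition you give. The only cosmetic point is that the transfer of undecidability through the semantic embedding is standardly run via the \emph{hereditary} undecidability of the first-order theory of finite graphs rather than Trakhtenbrot's theorem per se (since $\mathrm{Th}(\mathcal{Q}_{\mathrm{fin}})$ is a subtheory of $\mathrm{Th}(Q_{\mathrm{fin}}(\Bnalg{1}))$, one can also conclude directly once the latter is hereditarily undecidable); this is standard and the paper leaves it implicit as well.
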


Our proof did not use any properties of p-algebras except duality at the
finite level. This duality includes the usual Priestley duality, so the
technique 
applies without changes to any class of lattices closed under sublattices and
finite products, and containing a nontrivial member. Since any nontrivial
quasivariety of lattices is such a class, our method gives a new direct proof of
a well known result that all nontrivial varieties of lattices are finitely
undecidable. This result follows from~\cite{Idz89b}, and was also proved directly
by McKenzie, but not published (see~\cite{Kus02}).

\section{Final remarks and questions}

Let us first put Corollaries~\ref{cor:dec-qvar} and~\ref{cor:fin-dec-qvar} together,
to state the promised characterisation of decidable quasivarieties of
p-algebras.

\begin{theorem}
Let $\mathcal{Q}$ be a quasivariety of p-algebras.
The following are equivalent.
\begin{enumerate}
\item $\mathrm{Th}(\mathcal{Q})$ is decidable.
\item $\mathrm{Th}(\mathcal{Q}_{\mathrm{fin}})$ is decidable.
\item $\mathcal{Q}\subseteq \var{BA}$.
\item $\mathcal{Q}$ is either trivial or equal to $\var{BA}$.
\end{enumerate}  
\end{theorem}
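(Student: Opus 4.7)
The plan is to assemble the theorem from the two corollaries already proved, treating $(1)\Leftrightarrow(3)$ as Corollary~\ref{cor:dec-qvar} and $(2)\Leftrightarrow(3)$ as Corollary~\ref{cor:fin-dec-qvar}. This leaves only the equivalence $(3)\Leftrightarrow(4)$ to verify, which is a standard observation about subquasivarieties of $\var{BA}$ and contains no real content beyond what has already been done.

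The structure I would write is as follows. First, I would note that $(1)\Rightarrow(2)$ is immediate (decidability of $\mathrm{Th}(\mathcal{Q})$ trivially entails decidability of $\mathrm{Th}(\mathcal{Q}_{\mathrm{fin}})$, since membership in the finite part is first-order expressible only in a trivial sense; but more directly, both are shown equivalent to (3)). Then invoke Corollary~\ref{cor:fin-dec-qvar} for $(2)\Rightarrow(3)$, and Corollary~\ref{cor:dec-qvar} for $(3)\Rightarrow(1)$. This closes the cycle $(1)\Rightarrow(2)\Rightarrow(3)\Rightarrow(1)$.

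The only remaining step is $(3)\Leftrightarrow(4)$. The direction $(4)\Rightarrow(3)$ is trivial, so the content is $(3)\Rightarrow(4)$. Suppose $\mathcal{Q}\subseteq\var{BA}$ and $\mathcal{Q}$ is nontrivial. Then $\mathcal{Q}$ contains some nontrivial Boolean algebra, hence the two-element Boolean algebra $\alg{B}_1$ (as a subalgebra of any nontrivial Boolean algebra). Since every Boolean algebra embeds into a power of $\alg{B}_1$, we have $\var{BA}=\mathrm{SP}(\alg{B}_1)\subseteq Q(\alg{B}_1)\subseteq\mathcal{Q}\subseteq\var{BA}$, so $\mathcal{Q}=\var{BA}$. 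This is entirely routine.

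There is no real obstacle here, as the heavy lifting was done in Lemma~\ref{lem:hered-undec} and Lemma~\ref{lem:fingraph-int}. The only mild subtlety is to phrase the cycle of implications cleanly and to observe that, since $\alg{B}_1$ is finite, $Q(\alg{B}_1)=\mathrm{SP}(\alg{B}_1)=\var{BA}$, so $\var{BA}$ is itself a quasivariety (in fact a variety), making clause (4) a legitimate dichotomy within the lattice of subquasivarieties of $\var{Pa}$.
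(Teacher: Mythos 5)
Your proposal matches the paper's own route exactly: the theorem is assembled from Corollary~\ref{cor:dec-qvar} for $(1)\Leftrightarrow(3)$ and Corollary~\ref{cor:fin-dec-qvar} for $(2)\Leftrightarrow(3)$, together with the routine observation that the only subquasivarieties of $\var{BA}$ are the trivial one and $\var{BA}=Q(\alg{B}_1)$ itself, giving $(3)\Leftrightarrow(4)$. The only blemish is your parenthetical claim that $(1)\Rightarrow(2)$ is ``immediate'': in general decidability of $\mathrm{Th}(\mathcal{K})$ does not entail decidability of $\mathrm{Th}(\mathcal{K}_{\mathrm{fin}})$, but this does not affect your argument, since you correctly route both (1) and (2) through (3).
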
  

Since there are uncountably many
quasivarieties of p-algebras (cf. Wroński~\cite{Wro76} and
Gr\"atzer~\emph{et al.}~\cite{GLQ80}), uncountably many of their
quasi\-equational theories must be undecidable. It is known that
$Q(\Bnalg{i})$, for $i= 0,1,2$, are varieties without proper subsquasivarieties,
indeed, $Q(\Bnalg{i}) = \var{Pa}_i$. Hence, their quasiequational theories
coincide with equational theories, and these are decidable. As far as we know,
beyond that  there is no characterisation of quasivarieties with decidable
quasiequational theories, and no concrete and natural examples of
quasivarieties---necessarily containing $Q(\Bnalg{2})$---with undecidable
quasiequational theories.  

\begin{problem}
Construct a natural example of a quasivariety $Q$ of p-algebras such that
$Q(\Bnalg{2})\subseteq Q$ and $Q$ has undecidable
quasiequational theory. 
\end{problem}

\begin{problem}
Characterise quasivarieties of p-algebras with decidable
quasiequational theories. 
\end{problem}

It is known that all varieties of p-algebras have decidable equational theories.
Statman~\cite{Sta79} proved that the problem of determining whether an equation
holds in all Heyting algebras is PSPACE-complete, so the analogous problem
for p-algebras is certainly in PSPACE. On the other hand, the same
problem for Boolean algebras is coNP-complete.

\begin{problem}
What is the complexity of determining whether an equation holds in
all p-algebras? 
\end{problem}

\bibliographystyle{plain}
\bibliography{decidability}

\end{document}